\documentclass[12pt]{article}

\usepackage{tikz}
\usepackage{subfigure}
\usepackage[english]{babel}

\usepackage[center]{caption2}
\usepackage{amsfonts,amssymb,amsmath,latexsym,amsthm}
\usepackage{multirow}
\usepackage{threeparttable}
\usepackage{cases}

\topmargin  = -0.4 in \oddsidemargin = 0.25 in
\setlength{\textheight}{8.5in} \setlength{\textwidth}{6in}
\setlength{\unitlength}{1.0 mm}

\newtheorem{thm}{Theorem}

\newtheorem{lem}[thm]{Lemma}

\newtheorem{conj}[thm]{Conjecture}

\newtheorem{claim}{Claim}

\newtheorem{problem}{Problem}

\begin{document}

\title{The size of $3$-uniform hypergraphs with given matching number and codegree\thanks{The work was supported by NNSF of China (No. 11671376) and  NSF of Anhui Province (No. 1708085MA18).}}
\author{Xinmin Hou$^a$, \quad Lei Yu$^b$, \quad Jun Gao$^c$, \quad Boyuan Liu$^d$\\
\small $^{a,b,c,d}$ Key Laboratory of Wu Wen-Tsun Mathematics\\
\small School of Mathematical Sciences\\
\small University of Science and Technology of China\\
\small Hefei, Anhui 230026, China.\\
}
\date{}
\maketitle

\begin{abstract}
Determine the size of $r$-graphs with given graph parameters is an interesting problem. Chv\'atal and Hanson (JCTB, 1976) gave a tight upper bound of the size of 2-graphs  with restricted maximum degree and matching number; Khare (DM, 2014) studied the same problem for linear $3$-graphs with restricted matching number and maximum degree.  In this paper, we give a tight upper bound of the size of $3$-graphs with bounded codegree and matching number.
\end{abstract}

\section{Introduction}
Let $r\ge 2$ be an integer and $V$ be a set of elements. Write $V\choose r$ for the collection of subsets of size $r$ of $V$.
An {\it $r$-uniform hypergraph}, or an {\it $r$-graph} for short, on set $V$ is a pair $H=(V,E)$, where $V$ is called {\it vertex set}, and $E\subseteq {V\choose r}$ is called {\it edge set}.
The size of an $r$-graph $H=(V,E)$ is $|E|$, denoted by $e(H)$. A {\it matching} in $H$ is a set of pairwise disjoint edges of $H$, the {\it matching number} $\nu(H)$ of $H$ is the size of a maximum matching of $H$. Given $X\subseteq V$ with $|X|=k$, the {\it $k$-degree} $d_k(X)$ of $X$  is the number of edges $e$ of $H$ with $X\subseteq e$.
The {\it maximum $k$-degree} $\Delta_{k}(H)$ of $H$ is the maximum of $d_k(X)$ over all subsets $X\subseteq V(H)$ with $|X|=k$.
We call $d_1(X)$ and $d_{r-1}(X)$ the {\it degree } and {\it codegree} of $X$ in $H$, respectively. If $r=2$, we write graph for $2$-graph for short.


The problem of bounding the size of an $r$-graph by restricting some of its parameters was well studied in literatures. In this paper, we concern the $r$-graphs with matching number $\nu$ and maximum $k$-degree $\Delta_k$, $1\le k\le r-1$.
If we only consider matching number, Erd\H{o}s and Gallai{~\cite{PT-JCTA65}} proved that: {\it For all positive integers $n$ and $\nu$, if $G$ is a graph with $n$ vertices and $\nu(G)=\nu$, then $e(G)\le\max\left\{{2\nu+1\choose 2},(n-\nu)\nu+{\nu\choose 2}\right\}$. Furthermore, the upper bound is tight.}  The $r$-graph version  of Erd\H{o}s and Gallai's Theorem for $r\ge 3$ seems more difficult than the graph version.  Erd\H{o}s~\cite{PE-PMI61} proposed the following conjecture, known as Erd\H{o}s' Matching Conjecture,
\begin{conj}\label{CONJ: c1}[Erd\H{o}s, 1965]
 Every $r$-graph $H$ on $n$ vertices with matching number $\nu(H)<s\le n/r$ satisfies
$$e(H)\le\max\left\{{{rs-1}\choose r}, {n\choose r}-{{n-s+1}\choose r}\right\}.$$
\end{conj}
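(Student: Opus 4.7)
The statement is the famous Erd\H{o}s Matching Conjecture, open in full generality since 1965, so any ``proof plan'' here is really a description of the shifting framework behind the known partial results rather than a complete strategy. The plan is to first reduce to a shifted $r$-graph via the standard left-compression operation: replace an edge $e \ni j$ by $(e \setminus \{j\}) \cup \{i\}$ whenever $i < j$ and the replacement is not already an edge of $H$. This operation preserves both $e(H)$ and $\nu(H)$, so one may assume $H$ is shifted. The two candidate extremal families matching the maximum in the bound are (i) $H \subseteq \binom{[rs-1]}{r}$, the clique on $rs-1$ vertices (with $\nu = s-1$ automatically), and (ii) $H \subseteq \{e \in \binom{[n]}{r}: e \cap [s-1] \ne \emptyset\}$, in which every edge meets a fixed $(s-1)$-cover; the task is to show every shifted $H$ with $\nu(H) < s$ is dominated by one of these.

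I would then argue by induction on $s$, working with the link hypergraph $H(v) = \{e \setminus \{v\}: v \in e \in H\}$ for $v = 1$. The dichotomy is classical: either every edge of $H$ meets a fixed set of $s-1$ elements, which forces structure (ii) and an exact count via inclusion--exclusion, or one can greedily construct a short matching that saturates almost $rs$ vertices, and shiftedness then confines $H$ to the initial $rs-1$ vertices, forcing structure (i). Comparing $\binom{rs-1}{r}$ with $\binom{n}{r} - \binom{n-s+1}{r}$ as a function of $n$ explains why both families appear in the bound: the clique dominates for $n$ close to $rs-1$, and the cover dominates for large $n$.

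The main obstacle, and the reason the conjecture remains open, is the intermediate range of $n$ near the crossover between the two extremal sizes. There the shifting dichotomy does not cleanly force either structure, and one has to supplement it with a fractional matching / LP relaxation bound in the Frankl--Tokushige spirit, with stability versions of the Hilton--Milner theorem, or with the $\Delta$-system method. Known progress establishes the conjecture for $n \ge c \cdot rs$ with successively smaller constants $c$ (Erd\H{o}s; Bollob\'as--Daykin--Erd\H{o}s; Frankl; Huang--Loh--Sudakov; Frankl--Kupavskii) and for $r = 3$ across all $n$ (\L{}uczak--Mieczyk, Frankl). A realistic goal for a short plan is to reprove one of these partial results by careful bookkeeping inside the shifting argument; the full conjecture is beyond any sketch of this length.
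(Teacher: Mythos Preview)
Your assessment is correct in the most important respect: the paper does not prove this statement at all. It is labeled \emph{Conjecture~1} (the Erd\H{o}s Matching Conjecture) and is presented only as background and motivation; the paper merely cites \cite{FRR-CPC12,PF-JCTA13,HLS-CPC12} for partial progress and then moves on to its own, different problem (bounding $e(H)$ for $3$-graphs with prescribed codegree and matching number). So there is no ``paper's own proof'' to compare your proposal against.

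Your summary of the shifting framework, the two extremal families, and the status of partial results (large $n$ via Frankl, Huang--Loh--Sudakov, Frankl--Kupavskii; the $r=3$ case) is accurate and appropriate. Since the paper offers no argument for the conjecture, your explicit acknowledgment that no complete proof is available is exactly the right response here.
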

Some recent improvements of Conjecture~\ref{CONJ: c1} can be found in~\cite{FRR-CPC12,PF-JCTA13,HLS-CPC12}.
If we consider the problem of finding the maximum size of an $r$-graph with given matching number and maximum $k$-degree, $1\le k\le r-1$, such a problem for  graphs has been solved by Chv\'atal and Hanson~\cite{CH-DM76} (also can be found in~\cite{BK-DM09});
for $3$-graphs, Khare~\cite{KN-DM14} provided a bound on the size of linear $3$-graphs with restricted matching number and maximum degree.  Motivated by these results, in this paper, we consider the problem for $3$-graphs and give a tight upper bound for the maximum size of $3$-graphs  on $n$ vertices  with codegree  at most $\Delta_2$ and matching number at most $\nu$.
To state our main result, we first define two functions. For positive integers $\nu, \lambda, s$, define $g(2,\lambda, s)=0$ and for $\nu>2$
{\begin{equation*}
g(\nu,\lambda,s)=
\begin{cases}
\left\lfloor\frac{\lambda}{3}{\nu\choose2}-\frac{\nu}6\right\rfloor&
\begin{split}
&\text{if $\nu\equiv0 \pmod 6$ and $\lambda\equiv1\pmod2$}\\
&\text{or $\nu\equiv2 \pmod 6$ and $\lambda\equiv1,3 \pmod6$}\\
&\text{or $\nu\equiv2 \pmod 6$, $\lambda\equiv5 \pmod6$ and $s<{\nu}/{2}$}\\
&\text{or $\nu\equiv4 \pmod6$, $\lambda\equiv1 \pmod2$ and $s<{\nu}/{2}$};
\end{split}\\
\left\lfloor\frac{\lambda}{3}{\nu\choose2}-\frac{\nu}6\right\rfloor-1&
\begin{split}
&\text{if $\nu\equiv2\pmod6$, $\lambda\equiv5\pmod6$ and $s={\nu}/{2}$}\\
&\text{or $\nu\equiv4\pmod6$, $\lambda\equiv1\pmod2$ and $s={\nu}/{2}$};\\
\end{split}\\
\left\lfloor\frac{\lambda}{3}{\nu\choose2}\right\rfloor-1&
\begin{split}
&\text{if $\nu\equiv2\pmod6$, $\lambda\equiv4\pmod6$ and $s=0$}\\
&\text{or $\nu\equiv5\pmod6$, $\lambda\equiv1\pmod3$ and $s=0$};\\
\end{split}\\
\left\lfloor\frac{\lambda}{3}{\nu\choose 2} -\frac{2s}3\right\rfloor&\text{otherwise};\\
\end{cases}
\end{equation*}}
and for given integer $n\ge \nu$, 
\begin{equation*}
f(n,\nu,\Delta_2)=\left\lfloor\frac{\nu(n-\nu)\Delta_2}{2}\right\rfloor+
\begin{cases}
g(\nu,\Delta_2,0)&\text{ if } (n-\nu)\Delta_2\text{ is even or } \nu \text{ is even};\\
g(\nu,\Delta_2,\lfloor\frac{\nu}{2}\rfloor)&\text{ otherwise}.
\end{cases}
\end{equation*}

The main result of the paper is as follows.

\begin{thm}\label{THM: MAIN}
Given positive integers $\Delta_2$ and $\nu$, if $H$ is a $3$-graph of sufficiently large order $n$ with $\Delta_2(H)\le \Delta_2$ and $\nu(H)\le\nu$, then  $e(H)\le f(n,\nu,\Delta_2)$, and  the upper bound is tight.
\end{thm}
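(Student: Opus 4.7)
The plan is to establish tightness by explicit construction and to prove the matching upper bound by analyzing a maximum matching via codegree double-counting, with a delicate modular case analysis to extract the lower-order constant $g$.

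\emph{Construction.} Fix $S\subseteq V$ with $|S|=\nu$. On $S$ alone, place a 3-graph with the maximum number of edges subject to $\Delta_2\le\Delta_2$; by the standard packing/Steiner-triple-type divisibility analysis this optimum equals $\lfloor\tfrac{\Delta_2}{3}\binom{\nu}{2}\rfloor$ minus a defect governed by $(\nu,\Delta_2)\bmod 6$, and this produces the first term of $g$. For each $s\in S$, take a (near) $\Delta_2$-regular graph $L_s$ on $V\setminus S$ as its link, choosing the $\nu$ link graphs pairwise edge-disjoint; this is feasible for $n$ sufficiently large. Every edge of the resulting $H^\ast$ meets $S$, so $\nu(H^\ast)\le\nu$, every pair-codegree is $\le\Delta_2$, and $e(H^\ast)=f(n,\nu,\Delta_2)$; the $\lfloor\nu/2\rfloor$ branch of $g$ captures the additional parity defect arising when $(n-\nu)\Delta_2$ and $\nu$ are both odd, which forces $\lfloor\nu/2\rfloor$ of the links to drop a single edge.

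\emph{Upper bound via a dominant cover.} Let $M=\{e_1,\dots,e_s\}$ be a maximum matching of $H$ with $s\le\nu$, and write $V_M=V(M)$, $W=V\setminus V_M$. Every edge of $H$ meets $V_M$. The key structural step is: for each $e_i=\{a,b,c\}\in M$ and any two distinct $v,w\in e_i$, no pair of vertex-disjoint edges $f_1\in L_v^W$, $f_2\in L_w^W$ can exist, since $\{v\}\cup f_1$ and $\{w\}\cup f_2$ would replace $e_i$ in $M$ and augment it. This cross-intersecting property forces one ``dominant'' vertex $v_i^\ast\in e_i$ to absorb essentially the whole link in $W$, while each of the other two vertices contributes only $O(\Delta_2)$ link edges. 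Setting $S^\ast=\{v_1^\ast,\dots,v_s^\ast\}$ and double-counting codegrees over pairs $\{v,u\}$ with $v\in S^\ast$ and $u\in V$ yields, after accounting for the $O(s\Delta_2)$ exceptional edges avoiding $S^\ast$, $e(H)\le\lfloor s(n-s)\Delta_2/2\rfloor+\lfloor\binom{s}{2}\Delta_2/3\rfloor+o_n(1)$. Since $s(n-s)<\nu(n-\nu)$ for $s<\nu$ and $n$ large, we may reduce to $s=\nu$, where the bound matches $f(n,\nu,\Delta_2)$ up to a lower-order defect.

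\emph{Sharpening to $f$.} The remaining work, and the main obstacle, is to replace the lower-order defect above by the \emph{exact} constant $g(\nu,\Delta_2,\cdot)$ and to match its four cases. The corrections $-\nu/6$, $-1$, and $-2s/3$ encode precise divisibility obstructions to realizing a $\Delta_2$-regular 3-graph on $\nu$ vertices (generalizing the conditions $\nu\equiv 1,3\pmod 6$ for Steiner triple systems), together with a parity coupling between the internal edges on $S^\ast$ and the crossing edges to $V\setminus S^\ast$, with the second argument of $g$ switching to $\lfloor\nu/2\rfloor$ precisely when $(n-\nu)\Delta_2$ is odd and $\nu$ is odd. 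Executing this requires a detailed case-by-case analysis by residues of $\nu$ and $\Delta_2$ modulo $6$, together with a refinement for the two values of $s$ appearing in the definition of $g$; aligning these cases with the construction is the most delicate part of the proof.
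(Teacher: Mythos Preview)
Your construction and the main term of the upper bound are in line with the paper, but the sharpening step is where the proposal has a genuine gap, and it is precisely the step the paper handles differently. Your cross-intersecting argument only controls, for each matching edge $e_i$, the $W$-links of the two non-dominant vertices; it says nothing about edges that live entirely inside $V_M\setminus S^\ast$ or that use two vertices of $V_M\setminus S^\ast$ and one of $W$. Those contribute $O(\nu^2\Delta_2)$ edges avoiding $S^\ast$, not $O(s\Delta_2)$, and there is no mechanism in your outline forcing a compensating deficit among the $S^\ast$-meeting edges. Saying that the exact constant $g$ will emerge from ``a detailed case-by-case analysis by residues of $\nu$ and $\Delta_2$ modulo $6$'' is not a proof plan: the modular analysis governs the optimum \emph{inside} a $\nu$-vertex cover, but your $S^\ast$ is not a cover, so you never reduce to that situation.

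The paper avoids this difficulty entirely by induction on $\nu$ rather than by direct structural analysis of a maximum matching. Assuming $e(H)>f(n,\nu,\Delta_2)$, one first observes that every edge $e$ satisfies $|N(e)|\ge \Delta_2(n-\nu)/4$ (otherwise delete $V(e)$ and apply the inductive bound for $\nu-1$). Hence each of the $\nu$ matching edges contains a vertex of $1$-degree at least $\Delta_2(n-\nu)/12$; collect these into $U$. If some edge $e_0$ avoided $U$, the high degrees let one greedily pick, for each $x_i\in U$, an edge through $x_i$ disjoint from $e_0$, the previously chosen edges, and $U\setminus\{x_i\}$ (there are at most $3\nu\Delta_2$ forbidden edges through $x_i$), producing $\nu+1$ disjoint edges. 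Thus $U$ is an \emph{exact} $\nu$-vertex cover, and the sharp bound then follows from the configuration lemma (your ``delicate case analysis'' is carried out once and for all there, for hypergraphs already known to have a $\nu$-vertex cover). Replacing your dominant-vertex step by this induction-and-greedy argument closes the gap cleanly.
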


The rest of the paper is arranged as follows. In section 2, we construct extremal 3-graphs with restricted codegree, in fact the constructions come  from combinatorial design. In section $3$, we establish a structural lemma. Then the proof of Theorem~\ref{THM: MAIN} is presented in Section $4$, and we end the paper with some discussions.

\section{ Extremal 3-graphs with restricted codegree }

A {\it partial triple system }(PTS for short) PTS$(\nu,\lambda)$ is a  set $V$ of $\nu$ elements and a collection $\mathcal{B}$ of triples of $V$ so that every pair of elements occurs in at most $\lambda$ triples of $\mathcal{B}$.
If every pair of elements of $V$ occurs in exactly $\lambda$ triples of $\mathcal{B}$, then such a PTS$(\nu,\lambda)$ is called a {\it triple system} TS$(\nu,\lambda)$. 
A TS$(\nu,1)$ is called a {\it Steiner triple system} STS$(v)$.
A {\it maximum partial triple system} MPTS$(\nu,\lambda)$ is a PTS$(\nu, \lambda)$ with triples as many as possible.
A set system $(V,\mathcal{B})$ is called a {\it pairwise balanced design PBD$(\nu,\{3,5^*\},\lambda)$} if the following properties are satisfied:
(1) $|V|=\nu$, (2) $|B|=3$ for all $B\in \mathcal{B}$ except a specific one $B_0$ with $|B_0|=5$, and (3) every pair of points is contained in exactly $\lambda$ blocks of $\mathcal{B}$.

\noindent{\bf Remark:} It is natural to see a PTS$(\nu,\lambda)$ as a 3-graph with vertex set $V$ and edge set $\mathcal{B}$ such that $\Delta_2(\mbox{PTS}(\nu,\lambda))\le \lambda$. So, in this paper, we do not distinguish a 3-graph and a PTS.

\begin{thm}\label{THM: T1,T2,T3}
Let $\nu$ and $\lambda$ be positive integers. We have

(1)(\cite{CD-HCD07}) A TS$(\nu,\lambda)$ exists if and only if $\nu\neq 2$ and $\lambda\equiv 0\pmod {\gcd(\nu-2,6)}$.

(2) (\cite{K-CDM47}) A STS$(\nu)$ exists if and only if $\nu\equiv 1,3\pmod 6$.

(3) (\cite{RS-JCTA87})
The number of triples in an $MPTS(\nu,\lambda)$ is $g(\nu,\lambda,0)$.

(4) (\cite{CD-HCD07})
{A PBD$(\nu,\{3,5^*\},1)$ with $\nu>5$ exists if and only if $\nu\equiv 5\pmod 6$.}
\end{thm}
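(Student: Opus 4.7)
The plan is to treat Theorem~\ref{THM: T1,T2,T3} as a compendium of four classical existence/enumeration results from design theory, each with a well-established proof in the cited literature. My strategy is to reproduce the transparent necessary-condition counts in each part and defer the deeper sufficiency (constructive) directions to the references, since they rely on elaborate recursive design-theoretic constructions that are long to set up from scratch.

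For part (1), I would first derive the necessary divisibility conditions by double counting. If a TS$(\nu,\lambda)$ exists, every vertex lies in $\lambda(\nu-1)/2$ triples, forcing $\lambda(\nu-1)\equiv 0\pmod 2$; and the total number of triples is $\lambda\nu(\nu-1)/6$, forcing $\lambda\nu(\nu-1)\equiv 0\pmod 6$. A short case analysis modulo $6$ shows that these two divisibility constraints coalesce into $\lambda\equiv 0\pmod{\gcd(\nu-2,6)}$, together with the trivial obstruction $\nu\neq 2$. Sufficiency is Hanani's theorem, obtained via recursive constructions (products with small TS's, PBD-closure, plus a finite list of direct small designs); I would simply invoke Chapter II of~\cite{CD-HCD07}. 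Part (2) is the special case $\lambda=1$ of part (1): $\gcd(\nu-2,6)$ must divide $1$, which is equivalent to $\nu\equiv 1,3\pmod 6$. Kirkman's original cyclic and inductive constructions handle sufficiency, as in~\cite{K-CDM47}.

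For part (3), the upper bound $g(\nu,\lambda,0)$ on the number of triples of an MPTS$(\nu,\lambda)$ starts from the trivial vertex bound $d(v)\le\lfloor\lambda(\nu-1)/2\rfloor$ and the triple bound $\lfloor\lambda\binom{\nu}{2}/3\rfloor$. The subtleties appear when integrality forces the ``leave'' graph of uncovered pairs to contain certain obstructions (short odd cycles, near-perfect matchings, etc.), and a careful case split on $\nu,\lambda\pmod 6$ explains each of the piecewise corrections in the definition of $g(\nu,\lambda,0)$; I would cite the tight analysis in~\cite{RS-JCTA87} rather than redo the case enumeration. Matching lower-bound constructions come from deleting a minimum cover of the leave from a full TS of slightly larger parameters, or by direct block design. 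For part (4), necessity of $\nu\equiv 5\pmod 6$ follows from two counts: the number of triples $t$ in a PBD$(\nu,\{3,5^*\},1)$ satisfies $3t+\binom{5}{2}=\binom{\nu}{2}$, giving $\binom{\nu}{2}\equiv 10\pmod 3$; while replication at each vertex yields an analogous parity constraint. Combining the two pins down $\nu\pmod 6$. Sufficiency is a standard ``group-divisible-design plus filling'' recipe in~\cite{CD-HCD07}.

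The main obstacle in any of these parts is the sufficiency direction; reproducing Hanani's original constructions or Spencer's tight MPTS constructions from scratch would be prohibitively long and not in the spirit of this paper, which uses Theorem~\ref{THM: T1,T2,T3} as a black box to furnish extremal configurations in the main theorem. Accordingly, my proposal is to present Theorem~\ref{THM: T1,T2,T3} as a quoted compilation of \cite{CD-HCD07,K-CDM47,RS-JCTA87}, supplemented by the brief counting arguments above to make the necessary directions self-contained for the reader.
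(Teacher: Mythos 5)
Your proposal matches the paper's treatment: Theorem~\ref{THM: T1,T2,T3} is stated there purely as a compilation of known results with citations to \cite{CD-HCD07,K-CDM47,RS-JCTA87}, and no proof is given in the text, so quoting the references (with your correct supplementary divisibility counts for the necessity directions) is exactly in the spirit of the paper. The only nit is attributional: the general-$\lambda$ MPTS count cited is Rodger--Stubbs, not Spencer (who handled the $\lambda=1$ case).
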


The {\it leave} of a PTS$(\nu, \lambda)$ is the graph $G$ on vertex set $V$ in which an edge $\{x,y\}$ appears $\lambda-s$ times in $G$ if and only if  $\{x,y\}$ is contained in exactly $s$ triples of $\mathcal{B}$.
Clearly,
\begin{equation}\label{EQN: e1-leave-size}
 {e(PTS(\nu, \lambda))}=\frac{\lambda {\nu\choose 2}-{e(G)}}3.
\end{equation}
Let $G_1\cup G_2$ (resp. $G_1\dot\cup G_2$) denote the union (resp. disjoint union) of two graphs $G_1$ and $G_2$. For integer $k$, let $kG$ be the graph obtained from $G$ by replacing each edge of $G$ by $k$ copies of the edge, and write $k\{G\}$ for the disjoint union of $k$ copies of $G$.
When can a graph $G$ be the leave of a PTS? The following lemma gives us an obvious necessary condition and a sufficient and necessary condition depending on $G$.

\begin{lem}\label{LEM: L4,T5}
(1) (Lemma 2.1 in \cite{CR-LEN91}) If a multigraph $G$ with $\nu$ vertices and $m$ edges is the leave of a $PTS(\nu,\lambda)$, then $d_G(v)\equiv\lambda(\nu-1)\,(\text{mod }2)$ for $v\in V(G)$, and $m\equiv\lambda\binom{\nu}{2}\,(\text{mod }3)$.


(2) (Theorem 7 in~\cite{CR-DM13}, also can be found in \cite{CR-GC86,CR-JG87})
Let $\nu>2$ and let $G$ be a simple graph on at most $\nu$ vertices with every vertex of degree $0$ or $2$, then $G$ is the leave of a PTS$(\nu,\lambda)$ if and only if
\begin{itemize}
\item either $\lambda$ is even or $\nu$ is odd,

\item $3$ divides $\lambda\binom{\nu}{2}-{e(G)}$, and

\item (a) if $\lambda=1$ and $\nu=7$, then $G\neq 2\{C_3\}$,

   (b) if $\lambda=1$ and $\nu=9$, then $G\neq C_4\dot\cup C_5$, and

   (c) if $\lambda=2$ and $\nu=6$, then $G\neq 2\{C_3\}$.
   
\end{itemize}
\end{lem}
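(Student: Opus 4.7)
The plan is to treat the two parts separately. Part (1) reduces to two double-counting arguments, while Part (2) splits into an easy necessity (via Part (1) plus small-case analysis) and a genuinely hard sufficiency.

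For Part (1), I would fix a vertex $v$. In the multigraph $\lambda K_\nu$, the pair-degree at $v$ equals $\lambda(\nu-1)$, and every triple of $\mathcal{B}$ containing $v$ uses exactly two of these pair-ends. Hence if $t_v$ triples contain $v$, then $d_G(v) = \lambda(\nu-1) - 2t_v$, giving the parity condition $d_G(v) \equiv \lambda(\nu-1) \pmod 2$. Globally, $3\,e(\mathcal{B}) + e(G) = \lambda \binom{\nu}{2}$ by counting pair-incidences, which yields $e(G) \equiv \lambda\binom{\nu}{2} \pmod 3$.

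For Part (2), the hypothesis that $G$ is a simple graph with every degree in $\{0,2\}$ (so a disjoint union of cycles together with isolated vertices) combined with Part (1) immediately forces $\lambda(\nu-1)$ to be even, i.e.\ $\lambda$ even or $\nu$ odd, and $3 \mid \lambda \binom{\nu}{2} - e(G)$, establishing the first two bullets of necessity. The three obstructions in (a)--(c) then follow from direct inspection of the essentially unique small triple systems: (a) uses the uniqueness of the Fano plane STS(7); (b) uses a brief case check on the two STS(9)'s; and (c) amounts to enumeration of TS(6,2)'s.

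Sufficiency is where the real work sits, and I would attack it by induction on the number of cycles comprising $G$. The base case $G = \emptyset$ is Theorem~\ref{THM: T1,T2,T3}(1), yielding a TS$(\nu,\lambda)$ under the stated parity and divisibility. For the inductive step I would use \emph{trade} operations: starting from a PTS whose leave is $G$ minus one cycle $C$, locate and replace a short list of triples (a trade of small volume) in a way that deposits exactly the cycle $C$ into the leave while preserving everything else. The main obstacle is producing trades that realize \emph{arbitrary} cycle lengths on \emph{arbitrary} supports; the standard route is to prepare a small catalog of seed constructions for tiny $\nu$ and $\lambda$, then amplify them via recursive constructions (PBD closure, direct products with resolvable designs, Wilson-type fundamental constructions). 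Handling every residue class of $\nu$ and $\lambda$ simultaneously, and explaining why precisely the three excluded configurations in (a)--(c) resist every attempt, is where most of Colbourn--Rosa's original casework concentrates.
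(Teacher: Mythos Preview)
The paper does not supply a proof of this lemma at all: both parts are quoted verbatim from the literature (Part~(1) as Lemma~2.1 of \cite{CR-LEN91}, Part~(2) as Theorem~7 of \cite{CR-DM13}, with earlier versions in \cite{CR-GC86,CR-JG87}), and the paper simply \emph{uses} them as black boxes in the proof of Theorem~\ref{THM: T6} and Lemma~\ref{LEM: L7}. There is therefore no ``paper's own proof'' to compare your proposal against.

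That said, a brief assessment of your sketch on its own merits: your argument for Part~(1) is the standard double count and is complete as written. For Part~(2), your derivation of necessity (the first two bullets from Part~(1), the three sporadic exclusions by inspection of the small designs) is correct in outline. Your plan for sufficiency---induction on the number of cycles via trades, with recursive/PBD-closure constructions to reach all admissible $(\nu,\lambda)$---is indeed roughly how Colbourn and Rosa proceed, but you should be aware that ``locate and replace a short list of triples \dots\ that deposits exactly the cycle $C$'' is doing a great deal of work: the existence of trades realizing a prescribed cycle on a prescribed vertex set inside a given PTS is not automatic, and the original papers devote substantial effort to building enough starting designs and enough trade families to cover every case. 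Your proposal correctly identifies this as the hard part but does not yet contain the mechanism; as a proof it is a reasonable roadmap, not a proof. Since the present paper treats the lemma as a citation, no more than that is required here.
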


Let $s$ be a nonnegative integer.
A PTS$(\nu,\lambda,s)$ is a PTS whose leave contains at least $s$ independent edges.
Let MPTS$(\nu,\lambda,s)$ be a PTS$(\nu,\lambda, s)$ with the maximum number of triples among all of such PTSs.
Clearly, the leave of an MPTS$(\nu,\lambda, s)$ has the fewest edges among all of the leaves of PTS$(\nu,\lambda,s)$.  So, in order to determine the size of  an MPTS$(\nu,\lambda,s)$, we will look for the minimum possible leaves of MPTS$(\nu, \lambda, s)$.
The following theorem is a generalization of (3) of Theorem~\ref{THM: T1,T2,T3}, which has independent interest in combinatorial design.
The proof is similar with the one of (3) of Theorem~\ref{THM: T1,T2,T3},  we give the proof here for completeness.

\begin{thm}\label{THM: T6}
The number of triples in an MPTS$(\nu,\lambda,s)$ is $g(\nu,\lambda,s)$.
\end{thm}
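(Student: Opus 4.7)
By equation~\eqref{EQN: e1-leave-size}, the number of triples in any PTS$(\nu,\lambda)$ with leave $G$ equals $\frac{1}{3}\bigl(\lambda\binom{\nu}{2}-e(G)\bigr)$, so proving the theorem amounts to determining the minimum value of $e(G)$ over all leaves of PTS$(\nu,\lambda,s)$. My plan is therefore two-fold: first, use the necessary conditions of Lemma~\ref{LEM: L4,T5}(1) together with the matching constraint to derive a sharp lower bound on $e(G)$ matching each branch of $g(\nu,\lambda,s)$; second, exhibit in each branch an explicit PTS whose leave realises this minimum, invoking Lemma~\ref{LEM: L4,T5}(2) and Theorem~\ref{THM: T1,T2,T3} as the key existence tools.

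For the lower bound I would split on the parity of $\lambda(\nu-1)$. If $\lambda$ is odd and $\nu$ is even, every vertex of $G$ has odd degree by Lemma~\ref{LEM: L4,T5}(1), hence $e(G)\ge \nu/2$; equality requires $G$ to be a perfect matching, which is compatible with the ``matching of size $s$'' condition precisely when $s\le\nu/2$. Substituting into the mod-$3$ congruence $e(G)\equiv\lambda\binom{\nu}{2}\pmod{3}$ recovers the first branch $\lfloor\frac{\lambda}{3}\binom{\nu}{2}-\frac{\nu}{6}\rfloor$, and the ``$-1$'' correction in the case $s=\nu/2$ is forced exactly when a perfect matching fails the mod-$3$ congruence and must be augmented by three extra edges (for instance by replacing one matching edge with a triangle on a new vertex). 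In the complementary regime ($\lambda$ even or $\nu$ odd), all degrees are even, so each of the $2s$ vertices saturated by the required matching must have $G$-degree at least $2$, giving $e(G)\ge 2s$; adjusting the mod-$3$ residue by replacing a double edge by a triangle, or attaching a $C_4$ on new vertices, yields the ``otherwise'' bound $\lfloor\frac{\lambda}{3}\binom{\nu}{2}-\frac{2s}{3}\rfloor$. The third branch, with an extra $-1$ when $s=0$, corresponds to the situations in which the most economical candidate leave coincides with one of the exceptions (a)--(c) in Lemma~\ref{LEM: L4,T5}(2), so that three more edges must be added.

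For the construction, in each branch the target leave is taken to be a union of $s$ double edges (the ``otherwise'' branch), a perfect or near-perfect matching (the $\lambda$-odd, $\nu$-even branch), or one of these with a single triangle or $C_4$ appended for mod-$3$ adjustment. Every such candidate has all vertex degrees in $\{0,2\}$, so Lemma~\ref{LEM: L4,T5}(2) directly certifies it as the leave of some PTS$(\nu,\lambda)$, provided one avoids the small forbidden configurations. A concrete realisation is obtained by starting from a TS$(\nu,\lambda)$ whenever Theorem~\ref{THM: T1,T2,T3}(1) guarantees one and deleting $s$ pairwise disjoint triples to expose the required matching in the leave; in the residues for which no TS exists one starts instead from the MPTS$(\nu,\lambda)$ of Theorem~\ref{THM: T1,T2,T3}(3), or, when $\nu\equiv 5\pmod 6$, from the PBD$(\nu,\{3,5^*\},1)$ supplied by Theorem~\ref{THM: T1,T2,T3}(4), and performs a local swap to introduce the matching while keeping the leave of minimum size.

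The main obstacle is the bookkeeping of the $\nu\pmod 6$ and $\lambda\pmod 6$ residues together with the boundary values $s=0$ and $s=\nu/2$: these are precisely the places where one of the exceptions of Lemma~\ref{LEM: L4,T5}(2) blocks the generic construction and forces an extra edge in the leave, thereby producing the ``$-1$'' corrections in the definition of $g$. A secondary technicality is verifying that the matching of size $s$ can always be embedded in the chosen leave disjointly from the triangle or $C_4$ used for parity correction; this is possible because those auxiliary edges touch a bounded number of vertices while $\nu$ is free to be as large as needed.
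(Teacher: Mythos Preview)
Your outline has the right skeleton---minimize $e(G)$ via Lemma~\ref{LEM: L4,T5}(1) and then realise the minimum---but two of the key mechanisms are misidentified, and this makes the argument as written unrepairable without substantial changes.

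First, the source of the ``$-1$'' corrections is wrong. You attribute the third branch of $g$ (the $s=0$ cases) to the exceptional configurations (a)--(c) of Lemma~\ref{LEM: L4,T5}(2). But those exceptions concern only $\nu\in\{6,7,9\}$, whereas the third branch holds for \emph{every} $\nu\equiv 5\pmod 6$ with $\lambda\equiv 1\pmod 3$ and every $\nu\equiv 2\pmod 6$ with $\lambda\equiv 4\pmod 6$. The actual reason is that in these residues $e(G)\equiv 1\pmod 3$ while all degrees must be even, so $e(G)=1$ is impossible and the minimum jumps to $e(G)=4$. Similarly, the ``$-1$'' in the second branch (the $s=\nu/2$ cases) is not a mod-$3$ failure of the perfect matching: for $\nu\equiv 4\pmod 6$, $\lambda$ odd, the value $e(G)=\nu/2+1$ already satisfies the congruence and is realised by $K_{1,3}\dot\cup\tfrac{\nu-4}{2}\{K_2\}$, but that graph has matching number only $\nu/2-1$. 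Requiring $\nu(G)=\nu/2$ forces $e(G)\ge\nu/2+4$ (realised by $K_4\dot\cup\tfrac{\nu-4}{2}\{K_2\}$). So the correction is driven by the matching constraint, not by modular arithmetic.

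Second, your constructions do not go through. In the odd-degree regime ($\lambda$ odd, $\nu$ even) the target leave is a perfect matching or $K_{1,3}\dot\cup\text{matching}$ or $K_4\dot\cup\text{matching}$; none of these has all degrees in $\{0,2\}$, so Lemma~\ref{LEM: L4,T5}(2) is inapplicable and you need a different existence argument. In the even-degree regime you propose leaves consisting of $s$ double edges, but Lemma~\ref{LEM: L4,T5}(2) requires $G$ simple, so again it does not apply. And ``delete $s$ disjoint triples from a TS'' produces $s$ disjoint triangles in the leave, i.e.\ $3s$ edges rather than the optimal $\approx 2s$.

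The paper handles all of this by first reducing modulo $\gcd(\nu-2,6)$---writing any $\lambda$ as a TS$(\nu,\lambda-1-t)$ (empty leave) plus an MPTS$(\nu,t+1,s)$ with $1\le t+1\le\gcd(\nu-2,6)$---and then, for that bounded range of $\lambda$, giving eleven explicit constructions. The even-degree leaves are typically single cycles $C_{2s+\sigma}$ (to which Lemma~\ref{LEM: L4,T5}(2) does apply), while the odd-degree leaves are built by deleting a point from an STS$(\nu+1)$ or a PBD$(\nu+1,\{3,5^*\},1)$, and the awkward boundary cases ($s=\nu/2$, or $s=1$ with a multigraph leave) are obtained by superposing two or three smaller designs already constructed. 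Your proposal would need to incorporate these ideas to close the gaps.
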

\begin{proof} The (3) of Theorem~\ref{THM: T1,T2,T3} implies that the theorem holds for $s=0$. So in the following we assume that $s>0$ and we shall distinguish two cases according to the value of $\lambda$.

\noindent{\bf Case 1.} $1\leq\lambda\leq \gcd(\nu-2,6)$.


\renewcommand{\arraystretch}{2}
\begin{table}[h]

  \centering
  \fontsize{12}{8}\selectfont
  \caption{Possible values of $\lambda$ and $\nu$
  }\label{tab:}
    \begin{tabular}{|c|c|c|c|c|c|c|}
    \hline
    \multirow{2}{*}{$\lambda$}&
    \multicolumn{6}{c|}{$\nu$ under modulo 6}\cr\cline{2-7}
    &0&1&2&3&4&5\cr
    \hline
    1&odd/0&even/0&odd/1&even/0&odd/0&even/1\cr\hline
    2&even/0&&even/2&&even/0&even/2\cr\hline
    3&&&odd/0&&&even/0\cr\hline
    4&&&even/1&&&\cr\hline
    5&&&odd/2&&&\cr\hline
    6&&&even/0&&&\cr\hline
    \end{tabular}
\begin{itemize}
\item \small Odd or even represents the parity of the degrees of vertices in the leave of an MPTS$(\nu,\lambda,s)$.
\item \small The number under ``/" denotes the number of edges of the leave of an MPTS$(\nu,\lambda,s)$ under modulo 3.
\item\small The blank space in position $(i,j)$ means that the situation $\lambda=i$ and $\nu=j\pmod 6$ can not happen.
\end{itemize}
\end{table}
Let $G$ be the leave of an MPTS$(\nu,\lambda,s)$. By (1) of Lemma~\ref{LEM: L4,T5}, $d_G(v)\equiv \lambda(\nu-1)\pmod 2$ { for $v\in V(G)$} and $e(G)\equiv \lambda{\nu\choose 2}\pmod 3$. All possible values of $\lambda$ and $\nu$ are listed in Table~\ref{tab:}. We claim that $G$ actually exists in each case.


(I). If $\nu\equiv 0,2\pmod 6$ and $\lambda=1$, then $d_G(v)\equiv 1\pmod 2$ for all $v\in V(G)$ and $e(G)\equiv 0$ or $1\pmod 3$.
 This implies that $e(G)\ge \frac{\nu}{2}$ and the equality holds if and only if $G\cong\frac{\nu}{2}\{K_2\}$. Such a leave can be realized by a PTS$(\nu,1,s)$ obtained from a STS$(\nu+1)$ by removing a fixed element (i.e. removing all triples containing the fixed element), where the existence of STS$(\nu+1)$ is guaranteed by Theorem~\ref{THM: T1,T2,T3}. In fact, $G$ consists of $\frac\nu 2$ independent edges, each of which corresponds to a triple removed from STS$(\nu+1)$.
 This also implies that $e(\mbox{MPTS}(\nu,1,s))=\frac 13{\nu+1\choose 2}-\frac\nu 2=\frac{1}{3}\left({ \binom{\nu}{2}}-\frac{\nu}{2}\right)=g(\nu,1,s)$.

(II).  If $\nu\equiv 0,4 \pmod6$ and $\lambda=2$, or $\nu\equiv 2\pmod6$ and $\lambda=6$, or $\nu\equiv 1,3\pmod 6$ and $\lambda=1$,  or $\nu\equiv 5\pmod6$ and $\lambda=3$, then $d_G(v)\equiv 0\pmod 2$ for all $v\in V(G)$ and $e(G)\equiv 0\pmod3$ and so we have $e(G)\ge 2s+\sigma$, where $\sigma\in\{0,1,2\}$ and $s\equiv \sigma\pmod3$.
By (2) of Lemma~\ref{LEM: L4,T5}, there exists a PTS$(\nu,\lambda, s)$ with $G=C_{2s+\sigma}$ as its leave, unless $s=\lfloor\frac{\nu}{2}\rfloor$ and $\nu\equiv 2,4,5 \pmod6$.
Since $e(G)$ achieves its lower bound, we have $e($MPTS$(\nu,\lambda,s))=\frac{1}{3}({\lambda \binom{\nu}{2}}-2s-\sigma)=g(\nu,\lambda,s)$.
The proof for the cases $s=\lfloor\frac{\nu}{2}\rfloor$ and $\nu\equiv 2,4,5 \pmod6$ will be included in (XI), { (IV)} and (VII), respectively.


(III). If $\nu\equiv 4 \pmod 6$ and $\lambda=1$, then $d_G(v)\equiv 1\pmod 2$ for all $v\in V(G)$ and $e(G)\equiv 0\pmod 3$. This implies that $e(G)\ge \nu/2+1$,
and, moreover, if we require $\nu(G)=\nu/2$ then $e(G)\ge \nu/2+4$. We claim that there do exist PTS$(\nu,1,s)$ such that the { size} of its leave achieves the lower bound $\nu/2+1$
for $s<\nu/2$, and the lower bound $\nu/2+4$
for $s=\nu/2$. Choose a PBD$(\nu+1,\{3,5^*\},1)$
(the  existence of such a design is guaranteed by Theorem~\ref{THM: T1,T2,T3}), and denote the specific block of size five by $\{u,w,x,y,z\}$.  Let  PTS$(\nu+1,1)$ be a PTS obtained from the PBD$(\nu+1,\{3,5^*\},1)$ by replacing $\{u,w,x,y,z\}$ by two triples $\{u,w,z\}$ and $\{u,x,y\}$.
If $s\leq\frac{\nu}{2}-1$, delete $x$ from the PTS$(\nu+1,1)$, then the resulting PTS is a PTS$(\nu,1,s)$ with its leave isomorphic to  $K_{1,3}\dot\cup\frac{\nu-4}{2}\{K_2\}$, as claimed.
So {$e($MPTS$(\nu,1,s))=\frac{1}{3}\left({ \binom{\nu}{2}}-\frac{\nu}{2}-1\right)=g(\nu,1,s)$.}  For $s=\frac{\nu}{2}$, eliminate $u$ from the PTS$(\nu+1,1)$, the resulting PTS is a PTS$(\nu,1,\frac{\nu}{2})$ with its leave isomorphic to $K_4\dot\cup\frac{\nu-4}{2}\{K_2\}$, as claimed.
So {$e($MPTS$(\nu,1,\frac{\nu}{2}))=\frac{1}{3}\left({\binom{\nu}{2}}-\frac{\nu}{2}-4\right)=g(\nu,1,s)$. }

(IV). For $\nu\equiv 4\pmod 6$, $\lambda=2$ and $s={\nu}/{2}$, the possible $G$ has the property that $d_G(v)\equiv 0\pmod 2$, $e(G)\equiv 0\pmod3$ and $\nu(G)=\nu/2$. This implies that $e(G)\ge\nu+2$. By (III), there exists an MPTS$(\nu,1,\nu/2)$ with its leave isomorphic to $K_4\dot\cup\frac{\nu-4}{2}\{K_2\}$. Denote $V(K_4)=\{x,y,z,w\}$. Define
 $$\mbox{PTS}(\nu,2,\nu/2)=\mbox{MPTS}(\nu,1,\nu/2)\cup\mbox{MPTS}(\nu,1,\nu/2)\cup\{x,y,z\}\cup\{x,y,w\}.$$
 It can be easily checked that this is indeed a PTS$(\nu,2,\nu/2)$ and its leave $G=M\dot\cup\frac{\nu-4}{2}\{2K_2\}$, where $M$ is the join graph of $\overline{K_2}$ (spanned by $\{x,y\}$) and $2K_2$ (spanned by $\{w,z\}$).
Since $e(G)=\nu+2$ achieves its lower bound,  we have $e($MPTS$(\nu,2,{\nu}/{2}))=\frac{1}{3}\left({\lambda \binom{\nu}{2}}-\nu-2\right)=g(\nu,2,s)$.

(V). If $\nu\equiv 5\pmod 6$ and $\lambda=1$ or $\nu\equiv 2\pmod 6$ and $\lambda=4$, then $d_G(v)\equiv0\pmod2$ for all  $v\in V(G)$ and $e(G)\equiv1\pmod3$. This implies that $e(G)\ge 2s+\sigma$, where $\sigma\in\{0,1,2\}$ and $s\equiv \sigma+2\pmod 3$. By Lemma~\ref{LEM: L4,T5}, there is a PTS$(\nu,\lambda, s)$ with its leave $G$ isomorphic to $C_{2s+\sigma}$,
unless $s={\nu}/{2}$ when $\nu\equiv 2\pmod6$ (which will be included in (IX)). Clearly, $e(G)$ attains its lower bound, so we have $e($MPTS$(\nu,\lambda,s))
=\frac{1}{3}\left({\lambda \binom{\nu}{2}}-2s-\sigma\right)=g(\nu,\lambda,s)$.

(VI). If $\nu\equiv 2,5\pmod6$ and $\lambda=2$, then $d_G(v)\equiv0\pmod2$ for all $v\in V(G)$ and $e(G)\equiv2\pmod3$. This implies that $e(G)\ge 2s+\sigma$, where $\sigma\in\{0,1,2\}$ and $s\equiv \sigma+1\pmod3$. By (2) of Lemma~\ref{LEM: L4,T5}, there is a {PTS$(\nu,2)$}  with its leave $G$ isomorphic to $C_{2s+\sigma}$, unless $s=1$.
So, if $s\not=1$  we have $e($MPTS$(\nu,2,s))=\frac{1}{3}\left({\lambda \binom{\nu}{2}}-2s-\sigma\right)=g(\nu,2,s)$. For $s=1$, we claim that there is a PTS$(\nu, 2, 1)$ with its leave isomorphic to $2K_2$ (note that $2K_2$ is not simple, we can not apply (2) of Lemma~\ref{LEM: L4,T5} here).
For $\nu\equiv 2\pmod 6$, to construct a PTS$(\nu,2,1)$, we first choose a STS$(\nu+1)$ (the existence is guaranteed by Theorem~\ref{THM: T1,T2,T3}) and a triple $\{x,y,z\}$ from it, then replace $z$ by $y$ in each triple containing $z$ except $\{x,y,z\}$ and duplicate each triple containing no $y$, finally delete $z$, the resulting {PTS} is a PTS$(\nu,2,1)$ with leave $2K_2$ (spanned by $\{x,y\}$), as desired.
So, $e($MPTS$(\nu,2,1))=\frac{1}{3}\left({2\binom{\nu}{2}}-2\right)=g(\nu,2,1)$. For $\nu\equiv 5\pmod6$,
take the union of two MPTS$(\nu,1,1)$ with leaves $G_1=xyzwx$ and $G_2=xywzx$, respectively, (the existence of such PTSs is  guaranteed by (V))
and two triples $\{x,y,z\}$ and $\{x,y,w\}$, the resulting  { PTS} is a PTS$(\nu, 2,1)$ with its leave isomorphic to $2K_2$ (spanned by $\{z,w\}$). So
$e($MPTS$(\nu,2,1))=\frac{1}{3}\left({2\binom{\nu}{2}}-2\right)=g(\nu,2,1)$.

(VII). If $\nu\equiv 5\pmod 6$, $\lambda=3$ and $s=\lfloor{\nu}/{2}\rfloor$, then $e(G)\ge 2s+2=\nu+1$ by (II).
 Construct a PTS by taking the union of an MPTS$(\nu,1,\lfloor\frac{\nu}{2}\rfloor)$ with its leave isomorphic to $C_{\nu-1}$ (the existence is guaranteed by (V), and assume $\{x\}=V\setminus V(C_{\nu-1})$)  and an MPTS$(\nu,2,1)$ with its leave isomorphic to $2K_2$ (the existence is guaranteed by (VI), and assume $V(2K_2)=\{x,y\}$). The resulting PTS is actually a PTS$(\nu,3,\lfloor\frac{\nu}{2}\rfloor)$ with its leave $G$ isomorphic to $C_{\nu-1}\cup2K_2$, where $C_{\nu-1}$ and $2K_2$ has a unique common vertex $y$. Since $e(G)$ attains its lower bound, we have $e($MPTS$(\nu,3,\lfloor{\nu}/{2}\rfloor))=\frac{1}{3}({3\binom{\nu}{2}}-\nu-1)=g(\nu,3,\lfloor\nu/2\rfloor)$.

(VIII). If $\nu\equiv 2\pmod 6$ and $\lambda=3$, then $d_G(v)\equiv 1\pmod2$ and $e(G)\equiv 0\pmod 3$.  This implies that $e(G)\ge {\nu}/{2}+2$. Take the union of an MPTS$(\nu,1,s)$ with leave $G_1=\frac\nu2\{K_2\}$ (guaranteed by (I)) such that $xy\in E({ G_1})$ and an MPTS$(\nu,2,1)$ with leave $2K_2$ spanned by $\{x,y\}$ (guaranteed by (VI)), the resulting { PTS} is a PTS$(\nu,3,s)$ with leave $G=G_1\cup 2K_2\cong 3K_2\dot\cup\frac{\nu-2}{2}\{K_2\}$.
Clearly, $e(G)$ obtains its lower bound, so  {$e($MPTS$(\nu,3,s))=\frac{1}{3}\left({ 3\binom{\nu}{2}}-{\nu}/{2}-2\right)=g(\nu,3,s)$}.

(IX). If $\nu\equiv 2\pmod 6$, $\lambda=4$ and $s=\frac{\nu}{2}$, then $e(G)\ge \nu+2$ by (V).
Take the union of an MPTS$(\nu,1,s)$ with leave $G_1=\frac\nu2\{K_2\}$ (guaranteed by (I))  and an MPTS$(\nu,3,s)$ with leave $G_2=3K_2\dot\cup\frac{\nu-2}{2}\{K_2\}$ (guaranteed by (VIII)), the resulting { PTS} is a PTS$(\nu,4,{\nu}/{2})$ with leave $G_1\cup G_2\cong4K_2\dot\cup\frac{\nu-2}{2}\{2K_2\}$.
Since $e(G)$ attains its lower bound, we have $e($MPTS$(\nu,4,{\nu}/{2}))=\frac{1}{3}\left({4 \binom{\nu}{2}}-\nu-2\right)=g(\nu,4,\nu/2)$.

(X). If $\nu\equiv 2\pmod 6$ and $\lambda=5$, then $d_G(v)\equiv1\pmod2$ for all $v\in V(G)$ and $e(G)\equiv 2\pmod 3$. This implies that $e(G)\ge \nu/2+1$ and the equality holds if and only if $G=K_{1,3}\dot\cup\frac{\nu-4}2\{K_2\}$, moreover, if we require $\nu(G)=\nu/2$ then $e(G)\ge \nu/2+4$. Hence, for $s\leq{\nu}/{2}-1$, we first  take the union of two MPTS$(\nu,2,1)$ with leaves $2K_2$ (guaranteed by (VI)) such that the leaves  are spanned by $\{x,z\}$ and $\{y,z\}$, respectively,  the resulting  { PTS} is a PTS$(\nu,4, 1)$, then take the union of the PTS$(\nu,4,1)$ and an MPTS$(\nu,1,s)$ with leave $G_1=\frac\nu2\{K_2\}$ (guaranteed by (I)) such that $xy,zw\in E( {G_1})$
and finally union the triple $\{x,y,z\}$,
the resulting { PTS} is a PTS$(\nu,5,s)$ with leave $G$ isomorphic to $K_{1,3}\dot\cup\frac{\nu-4}{2}\{K_2\}$, where $K_{1,3}$ is a star spanned by $\{x,y,z,w\}$ with $z$ as its center.
Since $e(G)$ attains its lower bound, we have $e($MPTS$(\nu,5,s))=\frac{1}{3}\left({5 \binom{\nu}{2}}-\frac{\nu}{2}-1\right)=g(\nu, 5, s)$. For $s={\nu}/{2}$, take the union of an MPTS$(\nu,1,s)$ with leave $G_1=\frac\nu2\{K_2\}$ (guaranteed by (I)) such that $xy,zw\in E(G_1)$ and two MPTS$(\nu,2, 1)$ with leaves  isomorphic to $2K_2$ spanned by $\{x,y\}$ and $\{z,w\}$, respectively, (the existence is guaranteed by (VI)), the resulting PTS is a PTS$(\nu,5,\nu/2)$ with leave $G$ isomorphic to $2\{3K_2\}\dot\cup\frac{\nu-4}{2}\{K_2\}$.
Since $e(G)$ obtains its lower bound, we have $e($MPTS$(\nu,5,\frac{\nu}{2}))=\frac{1}{3}\left({5\binom{\nu}{2}}-\frac{\nu}{2}-4\right)=g(\nu,5,\nu/2)$.

(XI). If $\nu\equiv 2\pmod6$, $\lambda=6$ and $s={\nu}/{2}$, then $e(G)\ge \nu+1$ by (II).
 Take the union of an MPTS$(\nu,2,\frac{\nu}{2})$ with leave $G_1= {C_\nu}$ (guaranteed by (VI)) such that $xy,yz\in E(G_1)$ and two MPTS$(\nu,2,1)$ with leaves isomorphic to $2K_2$ (guaranteed by (VI) and assume that the leaves spanned by $\{x,z\}$ and $\{y,w\}$, respectively) and a triple $\{x,y,z\}$, the resulting PTS is a PTS$(\nu,6,{\nu}/{2})$ with leave $G=2K_2\cup C_{\nu-1}$, where $2K_2$ is spanned by $\{y,w\}$ and $C_{\nu-1}$ is spanned by $V(G)\setminus\{y\}$. Clearly, $\nu(G)=\nu/2$ and $e(G)$ achieves its lower bound, we have $e($MPTS$(\nu,6,{\nu}/{2}))=\frac{1}{3}\left({6 \binom{\nu}{2}}-\nu-1\right)=g(\nu,6,\nu/2)$.

Until now, all the possible values of $\nu$ and $1\leq\lambda\leq \gcd(\nu-2,6)$ have been discussed, this completes the proof of Case 1.

\noindent{\bf Case 2}. $\lambda\geq \gcd(\nu-2,6)+1$.

 Let $t\equiv\lambda-1\pmod{\gcd(\nu-2,6)}$. So the union of a TS$(\nu,\lambda-1-t)$ (the existence is guaranteed by Theorem~\ref{THM: T1,T2,T3}) and an MPTS$(\nu,t+1,s)$ is a PTS$(\nu,\lambda,s)$.
 Moreover, such a PTS$(\nu,\lambda,s)$ has the same leave as MPTS$(\nu,t+1,s)$.
 We claim that such a PTS is an MPTS$(\nu,\lambda,s)$, that is  the leave of such a PTS has minimum size. Let $G$ be the leave of such a PTS$(\nu,\lambda,s)$. The existence of $G$ is guaranteed by Case 1. Let $Q$ be a PTS$(\nu,\lambda,s)$ with its leave $G'$. Then $d_{G'}(v)\equiv d_G(v)\pmod2$ for any $v\in V(G')(=V(G))$ and $e(G')\equiv e(G)\pmod3$. By Case 1, $e(G)$ achieves its lower bound. This implies our claim.
 So $e($MPTS$(\nu,\lambda,s))=\frac{1}{3}\left({\lambda \binom{\nu}{2}}-e(G)\right)=g(\nu,\lambda,s)$.
\end{proof}

\section{A Configuration Lemma}

In this section, we prove a useful lemma.

Given a $3$-graph $H=(V,E)$ and a vertex $x\in V$, the {\it link graph} of $x$, denoted by $L_H(x)$ (or $L(x)$ for short), is a graph with vertex set $V$ and edge set $\{ S : S\subseteq {V\choose 2} \mbox{ and } S\cup\{x\}\in E\}$.
Given a graph $G$, let $f$ be a non-negative, integer-valued function on $V(G)$, a spanning subgraph $H$ of $G$ is called an {\it $f$-factor} of $G$ if $d_H(v)=f(v)$ for any $v\in V(G)$.

\begin{thm}[\cite{KT-NZ00}]\label{THM: T0}
Let $G$ be a graph and $a\leq b$, two positive integers. Suppose further that

1. $\delta(G)\geq \frac{b}{a+b}|V(G)|$, and

2. $|V(G)|>\frac{a+b}{a}(b+a-3)$.\\
If $f$ is a function from $V(G)$ to $\{a,a+1,\cdots,b\}$ such that $\sum\limits_{v\in V(G)}f(v)$ is even, then $G$ has an $f$-factor.
\end{thm}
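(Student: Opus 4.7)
The plan is to derive the theorem from Tutte's classical $f$-factor theorem, which asserts that $G$ has an $f$-factor if and only if for every pair of disjoint subsets $S, T \subseteq V(G)$,
\[\delta_f(S,T) := \sum_{v\in S} f(v) + \sum_{v\in T}\bigl(d_{G-S}(v) - f(v)\bigr) - q(S,T) \geq 0,\]
where $q(S,T)$ counts the components $C$ of $G-(S\cup T)$ for which $e_G(V(C),T) + \sum_{v\in V(C)} f(v)$ is odd. Since $\sum_{v} f(v)$ is even, a standard parity check shows $\delta_f(S,T)$ is always even; thus, if the inequality fails, it fails by at least $2$.

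Suppose for contradiction that $G$ has no $f$-factor, and choose disjoint $S, T$ with $\delta_f(S,T) \leq -2$ minimizing $|S|+|T|$. Write $n=|V(G)|$, $s=|S|$, $t=|T|$, and $q = q(S,T)$. Because $f(v) \in [a,b]$, we get
\[-2 \geq \delta_f(S,T) \geq as - bt + \sum_{v\in T} d_{G-S}(v) - q.\]
The minimum degree hypothesis yields $d_{G-S}(v) \geq \tfrac{b}{a+b}n - s$ for every $v\in T$, and trivially $q \leq n-s-t$. Substituting and rearranging produces an inequality whose right-hand side grows linearly in $n$ while the left-hand side is controlled by a polynomial in $a,b,s,t$; combined with the size hypothesis $n > \tfrac{a+b}{a}(a+b-3)$, this would force $(s,t)$ to be large, which I then expect to rule out by exploiting the minimality of $|S|+|T|$.

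The main obstacle is orchestrating the case analysis cleanly. Minimality of $|S|+|T|$ should give the useful constraints that $d_{G-S}(v) \leq f(v) - 1 \leq b - 1$ for all $v\in T$ (otherwise one could delete $v$ from $T$ without raising $\delta_f$ above $-2$), and an analogous constraint that restricts which vertices can lie in $S$. The degenerate case $T = \emptyset$ must be treated separately: each isolated component of $G - S$ witnesses a vertex of $G$ whose neighborhood lies inside $S$, so the minimum degree condition forces $s \geq \tfrac{b}{a+b}n$, and one bounds $q(S,\emptyset)$ by counting $S$-to-component edges through $\delta(G)$, producing the required contradiction against the size hypothesis. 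The symmetric boundary case $S = \emptyset$ is handled by the classical Berge-Tutte style deficiency formula and is the simplest of the subcases.
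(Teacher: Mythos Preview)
The paper does not prove this theorem at all: it is quoted verbatim from Katerinis and Tsikopoulos~\cite{KT-NZ00} and used only as a black box in the proof of Lemma~\ref{LEM: L7}. There is therefore no proof in the paper to compare your proposal against.

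That said, your overall strategy---reduce to Tutte's $f$-factor criterion, exploit the parity of $\delta_f(S,T)$, and take a bad pair $(S,T)$ minimizing $|S|+|T|$ so as to extract the constraint $d_{G-S}(v)\le f(v)-1$ for $v\in T$---is exactly the standard route for results of this type, and it is the approach used in the original Katerinis--Tsikopoulos paper. What you have written, however, is a sketch rather than a proof: the phrases ``I then expect to rule out'' and ``should give'' signal that the decisive inequalities have not actually been checked. In particular, the step where you combine $as - bt + t\bigl(\tfrac{b}{a+b}n - s\bigr) - (n-s-t) \le -2$ with the size hypothesis $n > \tfrac{a+b}{a}(a+b-3)$ requires a genuine case split on the relative sizes of $s$ and $t$ (and on whether $t=0$), and the constants must be tracked carefully for the argument to close. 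If you intend to include a self-contained proof, you will need to carry out that bookkeeping explicitly; otherwise, citing \cite{KT-NZ00} as the paper does is sufficient.
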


\begin{lem}\label{LEM: L7}
Let $\nu$ and $\Delta_2$ be positive integers and $n$ is appropriately large relative to $\nu$ and $\Delta_2$. Let $H$ be a  $3$-graph on $n$ vertices with $\Delta_2(H)\le \Delta_2$. If each edge of $H$ intersects a given subset of $V(H)$ of size $\nu$, then $e(H)\le f(n,\nu,\Delta_2)$.
\end{lem}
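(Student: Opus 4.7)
The plan is to partition the edges of $H$ according to how many of their vertices lie in the given $\nu$-subset $S$. For $i\in\{1,2,3\}$ let $E_i=\{e\in E(H):|e\cap S|=i\}$ and $t_i=|E_i|$; since $S$ meets every edge, $e(H)=t_1+t_2+t_3$. Each type-$1$ or type-$2$ edge contains exactly two pairs with one endpoint in $S$ and one outside, while type-$3$ edges contain none, so summing codegrees over all cross-pairs yields
\[
2(t_1+t_2)=\sum_{u\in S,\;v\in V(H)\setminus S}d_H(\{u,v\})\le\nu(n-\nu)\Delta_2,
\]
and since the left side is an even integer, $t_1+t_2\le\lfloor\nu(n-\nu)\Delta_2/2\rfloor$. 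Simultaneously the induced sub-$3$-graph $H[S]$ is a PTS$(\nu,\Delta_2)$ with leave $L$; setting $s=\nu(L)$ and applying Theorem~\ref{THM: T6} gives $t_3\le g(\nu,\Delta_2,s)\le g(\nu,\Delta_2,0)$, the second inequality by the monotonicity of $g$ in $s$. Adding these bounds already gives $e(H)\le f(n,\nu,\Delta_2)$ whenever $(n-\nu)\Delta_2$ is even or $\nu$ is even.

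The delicate remaining case is $\nu$ odd together with $\Delta_2$ and $n-\nu$ both odd, in which $f(n,\nu,\Delta_2)$ uses the smaller value $g(\nu,\Delta_2,\lfloor\nu/2\rfloor)$. If $s\ge\lfloor\nu/2\rfloor$ the monotonicity bound $t_3\le g(\nu,\Delta_2,\lfloor\nu/2\rfloor)$ suffices and we are done. Otherwise I would refine the cross-pair bound vertex by vertex. By Lemma~\ref{LEM: L4,T5}(1) every vertex of $L$ has even degree (since $\Delta_2(\nu-1)$ is even); let $k$ denote the number of isolated vertices of $L$. If $u\in S$ is isolated in $L$, then every $S$-pair through $u$ has codegree $\Delta_2$ already inside $H[S]$, hence $u$ lies in no type-$2$ edge and the per-vertex cross-pair sum $\sum_{v\notin S}d_H(\{u,v\})=2t_1(u)$ is an even integer bounded above by the odd number $(n-\nu)\Delta_2$, saving $1$. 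Aggregating over the isolated vertices sharpens the cross-pair inequality to $2(t_1+t_2)\le\nu(n-\nu)\Delta_2-k$, equivalently $t_1+t_2\le\lfloor\nu(n-\nu)\Delta_2/2\rfloor-\lfloor k/2\rfloor$. Since all non-isolated vertices of $L$ have even degree at least $2$ we have $e(L)\ge\nu-k$, so a $t_3$ larger than $g(\nu,\Delta_2,\lfloor\nu/2\rfloor)$ forces $e(L)$ small, which in turn forces $k$ large; checking the explicit formula of $g$ in each residue class of $\nu\bmod 6$ then shows the per-vertex saving $\lfloor k/2\rfloor$ always absorbs the excess $t_3-g(\nu,\Delta_2,\lfloor\nu/2\rfloor)$.

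The main obstacle is this final step: matching the per-vertex parity saving $\lfloor k/2\rfloor$ against the residue-class-dependent gap $g(\nu,\Delta_2,0)-g(\nu,\Delta_2,\lfloor\nu/2\rfloor)$, and verifying the resulting inequality throughout the arithmetic case analysis that parallels the one in the proof of Theorem~\ref{THM: T6}. The hypothesis that $n$ is sufficiently large is used only to ensure that the $O(n)$-sized cross-pair terms dominate over the $O(\nu^2\Delta_2)$-sized lower-order corrections, so that no boundary effects interfere with the per-vertex accounting.
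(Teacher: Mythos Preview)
Your decomposition $E_1\cup E_2\cup E_3$ and the treatment of the easy cases ($(n-\nu)\Delta_2$ even, or $\nu$ even) match the paper exactly. In the remaining case ($\nu$, $\Delta_2$, $n-\nu$ all odd) the two arguments diverge. The paper does \emph{not} argue via the matching number of the leave $G$ of $E_3$. Instead it introduces the type-$2$ shadow multigraph on $S$ with edge multiset $\{e\cap S:e\in E_2\}$, sets $q$ equal to the number of vertices of \emph{even} degree there, and observes two things: (i) for each such vertex $u$ the cross-pair sum $\sum_{v\notin S}d_H(\{u,v\})$ equals $2t_1(u)+d_{\text{shadow}}(u)$ and is therefore even, saving $1$, which yields $t_1+t_2\le\tfrac{\nu(n-\nu)\Delta_2-1}{2}-\tfrac{q-1}{2}$; and (ii) the shadow is a sub-multigraph of $G$, so $e(G)\ge t_2+\tfrac{\nu-q}{2}$, giving a bound on $t_3$ that involves $t_2$. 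Adding these and splitting only on whether $t_2\gtrless\lfloor\nu/2\rfloor$ finishes the upper bound directly, with no residue-class case analysis. Your route via isolated vertices of $G$ is strictly weaker at the saving step (every isolated $u$ has shadow-degree $0$, so your $k\le q$), but the verification you defer does in fact close: for odd $\nu$ one always has $g(\nu,\Delta_2,\lfloor\nu/2\rfloor)=\big\lfloor\tfrac{1}{3}(\Delta_2\binom{\nu}{2}-(\nu-1))\big\rfloor$, and combining $e(G)\ge\nu-k$ with $e(G)\equiv\Delta_2\binom{\nu}{2}\pmod 3$ gives $t_3-g(\nu,\Delta_2,\lfloor\nu/2\rfloor)\le\lfloor k/2\rfloor$ uniformly. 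So both approaches succeed; the paper's is cleaner because the shadow links $t_2$ and $t_3$ explicitly and avoids the residue bookkeeping you anticipate.

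One point you have wrong: the hypothesis that $n$ is large plays no role whatsoever in the upper bound --- every inequality above is exact for all $n$. In the paper the largeness is used solely in the \emph{constructions} showing $f(n,\nu,\Delta_2)$ is attained: one needs $n$ large to invoke Theorem~\ref{THM: T0} and produce $\nu$ edge-disjoint (near-)$\Delta_2$-regular graphs on $V(H)\setminus S$ that serve as the link graphs realising $E_1$. Your sketch omits these constructions entirely; while the lemma as stated only asserts the inequality, the sharpness is what gives the bound content and is where most of the effort in the paper's proof goes.
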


\begin{proof}[Proof]
Let $V_0\subseteq V(H)$ such that $|V_0|=\nu$ and $e\cap V_0\not=\emptyset$ for any $e\in E(H)$, and let $V_1=V(H)\backslash V_0$. Let $$E_i=\{e\in E(H)\,:\, |e\cap V_0|=i\}$$ and $\varepsilon_i=|E_i|$. By assumption, $E(H)=E_1\cup E_2\cup E_3$. Hence
\begin{gather*}
e(H)=\varepsilon_1+\varepsilon_2+\varepsilon_3.\nonumber
\end{gather*}
Since $\Delta_2(H)\le \Delta_2$, we have
\begin{gather}
3\varepsilon_3+\varepsilon_2\leq\binom{\nu}{2}\Delta_2,\label{EQN: e_2-varepsilon_3}\\
2\varepsilon_2+2\varepsilon_1\leq \nu(n-\nu)\Delta_2.\label{EQN:e_3-varepsilon_1+2}
\end{gather}
{Denote $V_0=\{v_i: 1\leq i\leq \nu\}$.}



\noindent{\bf Case 1}. $(n-\nu)\Delta_2$ is even.

By {Theorem~\ref{THM: T6}}, we have
\begin{equation}\label{EQN: varepsilon_3}
\varepsilon_3\le g(\nu,\Delta_2,0),
\end{equation}
and the upper bound can be achieved by choosing $E_3$ as an MPTS$(\nu, \Delta_2, 0)$.

Therefore, by (\ref{EQN:e_3-varepsilon_1+2}) and (\ref{EQN: varepsilon_3}),
\begin{equation}\label{EQN: varepsilon_1+2+3(0)}
\varepsilon_1+\varepsilon_2+\varepsilon_3\le\frac{\nu(n-\nu)\Delta_2}2+  g(\nu,\Delta_2,0)=f(n,\nu,\Delta_2).
\end{equation}
So, to  show our result, it is sufficient to verify that the upper bound of (\ref{EQN: varepsilon_1+2+3(0)}) is tight.
Define a function $f: V_1\rightarrow \{\Delta_2\}$,  as $n$ is sufficiently large, Theorem~\ref{THM: T0} implies that we can find $\nu$ edge-disjoint $f$-factors $G_1, G_2,\ldots, G_\nu$ of $K_{n-\nu}$. Clearly, each $f$-factor $G_i$ is a $\Delta_2$-regular graph on $V_1$. 
Define $E_1=\cup_{i=1}^\nu\{e\cup\{v_i\} : e\in E(G_i)\}$. Then $\varepsilon_1=\frac{\nu\Delta_2(n-\nu)}{2}$. Hence if we take $E_2=\emptyset$ and $E_3$ as an MPTS$(\nu, \Delta_2, 0)$, then $E_1\cup E_2\cup E_3$ is a desired edge set of a 3-graph $H$ with $e(H)$ achieving the upper bound.

\noindent{\bf Case 2.} $\Delta_2(n-\nu)$ is odd.


We first define two 3-graphs $H_1$ and $H_2$ which will be used in this case. Define a function $f: V_1\mapsto\{\Delta_2-1, \Delta_2\}$. Since $\Delta_2(n-\nu)$ is odd, both of $\Delta_2$ and $n-\nu$ are odd. Hence $(n-\nu)\Delta_2-1$ and $(n-\nu)\Delta_2-3$ are even. Note that $n$ is sufficiently large. By Theorem~\ref{THM: T0}, we can find $\nu$ edge-disjoint $f$-factors $G_1, G_2,\ldots, G_\nu$ in $K_{n-\nu}$ such that each $G_i$ has exactly one vertex $y_i$ with $d_{G_i}(y_i)=\Delta_2-1$, and we also can find $\nu$ edge-disjoint $f$-factors $G_1, G_2,\ldots, G_\nu$ in $K_{n-\nu}$ such that each $G_i$ for $i<\nu$ has exactly one vertex $y_i$ with $d_{G_i}(y_i)=\Delta_2-1$ and $G_\nu$ has three vertices $y^t_\nu,t=1,2,3$ with $d_{G_\nu}(y^t_\nu)=\Delta_2-1$. Moreover, we may assume $y_{2i-1}=y_{2i}$ for $1\le i\le \lfloor\frac \nu 2\rfloor$ for the first case, and assume $y_{2i-1}=y_{2i}$ for $1\le i\le \lfloor\frac \nu 2\rfloor-2$ 
for the latter case.
Denote $\ell=\lfloor\frac\nu2\rfloor$. Let 
{$$E_1'=\bigcup_{i=1}^\nu\{e\cup\{v_i\} : e\in E(G_i)\},$$}
{$$E_2'=\bigcup_{i=1}^{\ell}\{v_{2i-1}, v_{2i}, y_{2i}\}$$} for the first case and
{$$E_2''=\left(\bigcup_{i=1}^{\ell-2}\{v_{2i-1}, v_{2i}, y_{2i}\}\right)\bigcup\left(\bigcup_{t=1}^3\{v_{\nu-t}, v_\nu, y^t_\nu\}\right)$$} for the latter case.
Define $H_1=(V_0\cup V_1, E_1'\cup E_2')$ and $H_2=(V_0\cup V_1, E_1'\cup E_2'')$.

\noindent {\bf Subcase 2.1.} $\Delta_2(\nu-1)$ is odd.

Since $\Delta_2(\nu-1)$ is odd, we have $\nu$ is even. We show that the upper bound of (\ref{EQN: varepsilon_1+2+3(0)}) is also tight in this case.
Choose $E_3$ as an MPTS$(\nu,\Delta_2,0)$ and $L$ as the leave of $E_3$. By Lemma~\ref{LEM: L4,T5}, $d_L(v)\equiv 1\pmod2$. By $3\varepsilon_3+e(L)={\nu\choose 2}\Delta_2$, we have $e(L)={\nu\choose 2}\Delta_2-3g(\nu,\Delta_2,0)=\nu/2$, $\nu/2+1$ or $\nu/2+2$. By the proof of Theorem~\ref{THM: T6}, we may choose $E_3$ such that $\nu(L)=\nu/2$ when {$e(L)\in\{\nu/2, \nu/2+2\}$} and {$L=K_{1,3}\dot\cup\frac{\nu-4}2\{K_2\}$} when $e(L)=\nu/2+1$. Now we partition $V_0$ into sets 
{$\{v_1,v_2\}, \{v_3, v_4\},\ldots, \{v_{\nu-1},v_\nu\}$} such that 
{$\{v_1v_2, v_3v_4, \ldots, v_{\nu-1}v_\nu\}$} forms a perfect matching of $L$ when $\nu(L)=\nu/2$,
and {$\{v_1v_2,  \ldots, v_{\nu-3}v_{\nu}\}$} forms a maximum matching of $L$ and 
{$\{v_{\nu-3}, v_{\nu-2}, v_{\nu-1}, v_\nu\}$} induces a $K_{1,3}$ with $d_L(v_{\nu})=3$ {when $e(L)=\nu/2+1$}. If $\nu(L)=\nu/2$, then we may
choose $E_1=E_1'$ and $E_2=E_2'$. If 
{$e(L)=\nu/2+1$}, then we
{ choose $E_1=E_1'$ and $E_2=E_2''$}. Hence
$\varepsilon_1+\varepsilon_2=\frac{\nu(n-\nu)\Delta_2}{2}$ in each case. Therefore,
$$\varepsilon_1+\varepsilon_2+\varepsilon_3=\frac{\nu(n-\nu)\Delta_2}{2}+g(\nu,\Delta_2,0),$$ as claimed.




\noindent{\bf Subcase 2.2.} $\Delta_2(\nu-1)$ is even.

Since $\Delta_2(\nu-1)$ is even, we have $\nu$ is odd.
Let $L$ be the graph with vertex set $V_0$ and edge set $\{e\cap V_0 : e\in E_2\}$.
Denote 
$d_i=d_L(v_i)$. Then $\sum_{i=1}^\nu d_i=2\varepsilon_2$.
Set $p=|\{i: d_i\text{ is odd}\}|$ and $q=|\{i: d_i\text{ is even}\}|$.
For each $v_i\in V_0$, let $L_i$ be the graph with vertex set $V_1$ and edge set $\{e\cap V_1 : e\in E_1 \mbox{ and } v_i\in e\}$.
Then $\sum_{i=1}^\nu e(L_i)=\varepsilon_1$. Since $d_i+\sum\limits_{v\in V(L_i)}d_{L_i}(v)=\sum\limits_{\substack{S=\{v_i,v\}\\ v\in V_1}}d_H(S)\le (n-\nu)\Delta_2$, we have $e(L_i)\le \left\lfloor\frac{\Delta_2(n-\nu)-d_i}{2}\right\rfloor$. So
\begin{eqnarray}\label{EQN: e_1+e_2(1)}
\varepsilon_1+\varepsilon_2&\leq&\sum_{d_i\text{ odd}}\frac{(n-\nu)\Delta_2-d_i}{2}+\sum_{d_i\text{ even}}\frac{(n-\nu)\Delta_2-d_i-1}{2}+\varepsilon_2\nonumber\\
   &=&\frac{\nu(n-\nu)\Delta_2}{2}-\frac{\text{$q$}}{2}\nonumber\\
   &=&\frac{\nu\Delta_2(n-\nu)-1}{2}-\frac{\text{$q$}-1}{2}.
\end{eqnarray}

Clearly, if we see $E_3$ as a PTS$(\nu,\Delta_2)$ on $V_0$, then $L$ is a subgraph of the leave $G$ of $E_3$. By Lemma~\ref{LEM: L4,T5}, $d_G(v)\equiv 0\pmod 2$ { for all $v\in V(G)$} and $e(G)\equiv {\nu\choose 2}\Delta_2\pmod3$. Since $L$ has $p$ vertices of degree odd, we have $e(G)-e(L)\ge p/2$.
Since $3\varepsilon_3+e(G)={\nu\choose 2}\Delta_2$, we have
\begin{eqnarray*}
\varepsilon_3\leq\left\lfloor\frac{1}{3}\left(\Delta_2\binom{\nu}{2}-\varepsilon_2-\frac{p}{2}\right)\right\rfloor.
\end{eqnarray*}
Note that $p+q=\nu$ and $q\ge 1$. We have,
$$\varepsilon_1+\varepsilon_2+\varepsilon_3\leq\frac{\Delta_2\nu(n-\nu)-1}{2}+\left\lfloor\frac{1}{3}\left(\Delta_2\binom{\nu}{2}-\varepsilon_2-\frac{\nu-1}2-q+1\right)\right\rfloor.$$
If $\varepsilon_2\geq \lfloor\frac{\nu}{2}\rfloor$, then
\begin{eqnarray*}
\varepsilon_1+\varepsilon_2+\varepsilon_3&&\leq\frac{\Delta_2\nu(n-\nu)-1}{2}+\left\lfloor\frac{1}{3}\left(\Delta_2\binom{\nu}{2}-\nu-q+2\right)\right\rfloor\\
     &&\leq\frac{\Delta_2\nu(n-\nu)-1}{2}+g(\nu,\Delta_2,\left\lfloor{\nu}/{2}\right\rfloor).
\end{eqnarray*}
If $\varepsilon_2<\lfloor{\nu}/{2}\rfloor$, note that $\nu=p+q\le 2\varepsilon_2+q$, we have $q\geq \nu-2\varepsilon_2$. So
\begin{eqnarray*}
\varepsilon_1+\varepsilon_2+\varepsilon_3&&\leq\frac{\Delta_2\nu(n-\nu)-1}{2}+\left\lfloor\frac{1}{3}\left(\Delta_2\binom{\nu}{2}-\nu-(\lfloor{\nu}/{2}\rfloor-\varepsilon_2-1)\right)\right\rfloor\\
     &&\leq\frac{\Delta_2\nu(n-\nu)-1}{2}+g(\nu,\Delta_2,\lfloor{\nu}/{2}\rfloor).
\end{eqnarray*}
Therefore, in each case,  we have
\begin{equation}\label{EQN: varepsilon_1+2+3(1)}
e(H)\leq\left\lfloor\frac{\nu\Delta_2(n-\nu)}{2}\right\rfloor+g(\nu,\Delta_2,\lfloor{\nu}/{2}\rfloor)=f(n,\nu,\Delta_2).
\end{equation}
Now we construct $3$-graphs $H$ such that $e(H)$ attains the upper bound of (\ref{EQN: varepsilon_1+2+3(1)}). Choose $E_3$ as an MPTS$(\nu,\Delta_2,\lfloor\nu/2\rfloor)$ and assume $G$ is the leave of $E_3$. So $\nu(G)=\lfloor\nu/2\rfloor$. Partition $V_0$ into subsets 
{$\{v_1, v_2\}, \ldots, \{v_{\nu-2}, v_{\nu-1}\}, \{v_\nu\}$} such that 
{$\{v_1v_2, \ldots, v_{v-2}v_{\nu-1}\}$} forms a maximum matching of $G$. Choose
$E_1=E_1'$ and $E_2=E_2'$. So, we have
$$\varepsilon_1+\varepsilon_2=e(H_1)=\frac{\nu(n-\nu)\Delta_2-1}2.$$
Therefore,
$$e(H)=\left\lfloor\frac{\nu(n-\nu)\Delta_2}{2}\right\rfloor+g(\nu,\Delta_2,\lfloor{\nu}/{2}\rfloor)=f(n,\nu,\Delta_2).$$

\end{proof}

\section{Proof of Theorem~\ref{THM: MAIN}}

Our proof is by induction on the matching number, in the following lemma we give the proof  of the base case.
\begin{lem}\label{LEM: L8}
If $H$ is  a $3$-graph on $n$ vertices with $\Delta_2(H)\leq\Delta_2$ and $\nu(H)=1$, then
 $e(H)\le f(n,1,\Delta_2)$ for $n\geq 32$. 
\end{lem}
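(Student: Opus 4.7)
The proof should split on whether the edge set of $H$ admits a vertex common to every edge, since the intersecting condition $\nu(H)=1$ has very different implications in these two regimes.

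In the \emph{common-vertex case}, where some $v^* \in V(H)$ lies in every edge, the map $e \mapsto e \setminus \{v^*\}$ identifies $E(H)$ with the edge set of the link graph $L(v^*)$. This is a simple graph on $n-1$ vertices whose maximum degree equals the codegree $d_H(\{v^*,\cdot\}) \le \Delta_2$, so $e(H) = e(L(v^*)) \le \lfloor (n-1)\Delta_2/2 \rfloor$, which coincides with $f(n,1,\Delta_2)$ since the $g$-contribution vanishes when $\nu=1$.

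In the \emph{no-common-vertex case}, the plan is to establish the uniform bound $e(H) \le 9\Delta_2$, which is dominated by $\lfloor (n-1)\Delta_2/2 \rfloor$ whenever $n \ge 19$, hence in particular for $n \ge 32$. Fix a vertex $v$ with $d_H(v) \ge 1$; by hypothesis some edge $e_v = \{p,q,r\}$ avoids $v$. Because $\nu(H) = 1$, every edge through $v$ must meet $e_v$, so must contain one of $p, q, r$. The codegree bound then gives
\[
d_H(v) \le d_H(\{v,p\}) + d_H(\{v,q\}) + d_H(\{v,r\}) \le 3\Delta_2.
\]
Now pick any edge $e_0 = \{x,y,z\} \in E(H)$: every edge of $H$ intersects $e_0$, hence $e(H) \le d_H(x) + d_H(y) + d_H(z) \le 9\Delta_2$.

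The main obstacle is the no-common-vertex case, where one must extract an $O(\Delta_2)$ global bound (independent of $n$) from the intersecting condition alone. The key is a double application of $\nu(H) = 1$: first locally, to cap each individual degree at $3\Delta_2$ via codegree; then globally, to bound the total number of edges by summing three degrees over one reference edge. Note that Lemma~\ref{LEM: L7} does not directly suffice here, since applying it with $V_0$ equal to the three vertices of any edge yields only the weaker estimate $e(H) \le f(n,3,\Delta_2)$.
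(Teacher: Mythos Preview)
Your proof is correct and follows the same overall two-case split as the paper, but your handling of the no-common-vertex case is genuinely different and cleaner. The paper picks a vertex $x$ of maximum $1$-degree and then case-splits on whether $\nu(L(x))=1$ (link graph a star or a triangle) or $\nu(L(x))\ge 2$; in the latter case it locates a six-vertex set $T$ that every edge meets in at least two points, giving $e(H)\le \binom{6}{2}\Delta_2=15\Delta_2$, and it is this $15\Delta_2$ that forces the threshold $n\ge 32$. Your argument instead bounds \emph{every} vertex degree by $3\Delta_2$ directly (since in the no-common-vertex case each vertex misses some edge, and any edge through it must meet that missed edge), and then sums three such degree bounds over one reference edge to obtain $e(H)\le 9\Delta_2$, which already suffices for $n\ge 19$. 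So your route avoids the link-graph case analysis, yields a sharper constant, and lowers the $n$-threshold. In the common-vertex case you compute directly rather than invoking Lemma~\ref{LEM: L7}; this is essentially the same, since for $\nu=1$ the upper-bound half of Lemma~\ref{LEM: L7} reduces to exactly the link-graph degree count you wrote out. One minor remark: the paper allows $H$ to be a multi-$3$-graph, so $L(v^*)$ need not be simple, but your inequality $e(L(v^*))\le\lfloor (n-1)\Delta_2/2\rfloor$ only uses $d_{L(v^*)}(u)=d_H(\{v^*,u\})\le \Delta_2$ and therefore remains valid.
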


\begin{proof}

{If all edges of $H$ intersect a fixed vertex $x\in V(H)$, then, by Lemma~\ref{LEM: L7}, $e(H)\le f(n,1,\Delta_2)$, as desired.
If not, choose a vertex $x$ of $H$ with maximum $1$-degree and let $L(x)$ be the link graph of $x$. Without loss of generality, we may assume $\Delta_1(H)\ge \Delta_2(H)\ge 2$ (otherwise, the result is trivial). If $\nu(L(x))=1$, then $L(x)$ must either be a triangle or a star. If $L(x)$ is a triangle spanned by $\{y,z,w\}$ say, then
each edge in $H$ must contains at least two vertices of $\{y,z,w\}$. So we have
\begin{equation*}
e(H)\le \binom{3}{2}\Delta_2\leq f(n,1,\Delta_2)
\end{equation*}
when $n\geq8$. If $L(x)$ is a star with centre $w$ say, we can assume that $d_1(x)=d_1(w)=\Delta_1(H)\ge 2$. Choose two edges $e=\{x,w,y\}$ and  $f=\{x,w,z\}$ in $H$.
Since $d_1(x)=\Delta_1(H)$, each edge of $H$ not containing $x$ does not contain $w$, too.
Since $\nu(H)=1$, each edge not containing $x$ must contain $y$ and $z$. 
So we have
\begin{equation*}
e(H)\le 2\Delta_2\leq f(n,1,\Delta_2)
\end{equation*}
when $n\ge 6$.
If $\nu(L(x))\geq 2$, let $\{w,y\}$ and $\{u,z\}$ be two independent edges in $L(x)$, then each edge not containing $x$ must intersect both of $\{w,y\}$ and $\{u,z\}$ as $\nu(H)=1$.  
Without loss of generality, assume $\{y,z,v\}\in E(H)$ and $v\not=x$, let $T=\{x,w,y,z,u,v\}$. Since $\nu(H)=1$,  every edge of $H$ must intersect $T$ at least $2$ vertices. We have
\begin{equation*}
e(H)\le \binom{6}{2}\Delta_2\leq f(n,1,\Delta_2)
\end{equation*}
when $n\ge 32$.
This completes the proof of the lemma.}
\end{proof}

Now we give  the proof of the main theorem.

\begin{proof}[Proof of Theorem~\ref{THM: MAIN}]
We proceed by induction on $\nu$. Lemma~\ref{LEM: L8} gives the proof of the base case. Suppose that the result holds for all 3-graphs with maximum codegree at most $\Delta_2$ and matching number at most $\nu-1$. Now let $H$ be a $3$-graph on $n$ vertices with $\Delta_2(H)\le\Delta_2$ and $\nu(H)=\nu$. Suppose to the contrary that $e(H)>f(n,\nu,\Delta_2)$. For any edge $e\in E(H)$, define $N(e)=\{f : f\cap e\not=\emptyset, f\in E(H)\}$. We claim that
\begin{eqnarray*}
|N(e)|\geq\frac{\Delta_2(n-\nu)}{4}.
\end{eqnarray*}
Otherwise, $e(H-V(e))> f(n,\nu,\Delta_2)-\frac{\Delta_2(n-\nu)}{4}>f(n-3,\nu-1,\Delta_2)$. By induction hypothesis, $\nu(H-V(e))\ge \nu$. This implies that $\nu(H)\ge \nu+1$,
a contradiction.

Since $e(H)>f(n,\nu,\Delta_2)\ge f(n,\nu-1,\Delta_2)$, by induction hypothesis,  we can find $\nu$ independent edges in $H$ and each of them contains a vertex of $1$-degree at least $\frac{\Delta_2(n-\nu)}{12}$ as $|N(e)|\geq\frac{\Delta_2(n-\nu)}{4}$ for any $e\in E(H)$.
Take a vertex of 1-degree at least  {$\frac{\Delta_2(n-\nu)}{12}$} from each of these $\nu$ independent edges and put them together as a set $U$. Denote $U=\{x_1, x_2,\ldots,x_\nu\}$.

\begin{claim}
$e(H-U)=0$.
\end{claim}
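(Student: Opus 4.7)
The plan is to prove the claim by contradiction, showing that if $H-U$ contained any edge $e^*$, we could build a matching of size $\nu+1$ in $H$, contradicting $\nu(H)=\nu$. The matching would consist of $e^*$ together with one edge $e_i'$ through each $x_i\in U$, all chosen pairwise disjoint.

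First I would fix $\nu$ pairwise disjoint edges $e_1,\ldots,e_\nu$ of $H$ from which the vertices $x_1,\ldots,x_\nu$ were drawn, and assume for contradiction there exists $e^*\in E(H)$ with $e^*\cap U=\emptyset$. The original edges $e_1,\ldots,e_\nu$ may well meet $e^*$ on vertices other than the $x_i$, so we cannot use them directly; instead, the key idea is to \emph{reroute} each $x_i$ through a fresh edge $e_i'\ni x_i$ using its large $1$-degree.

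The construction proceeds greedily. Having chosen $e_1',\ldots,e_{i-1}'$, I would select $e_i'$ to be an edge of $H$ through $x_i$ whose other two vertices avoid the forbidden set
\[
F_i \;=\; (e^*\setminus U)\;\cup\;(U\setminus\{x_i\})\;\cup\;\bigl(\bigcup_{k<i}e_k'\setminus\{x_k\}\bigr),
\]
whose cardinality is bounded by $3+(\nu-1)+2(i-1)\le 3\nu$. An edge through $x_i$ is counted in $L(x_i)$, and the number of such edges hitting any particular vertex $v\in F_i$ is at most $d_2(\{x_i,v\})\le\Delta_2$. Hence the number of edges through $x_i$ that are blocked by $F_i$ is at most $|F_i|\cdot\Delta_2\le 3\nu\Delta_2$. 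Since $d_1(x_i)\ge \Delta_2(n-\nu)/12$ and $n$ is large enough that $(n-\nu)/12>3\nu$ (e.g., $n>37\nu$), a valid choice of $e_i'$ exists. Then $\{e^*,e_1',\ldots,e_\nu'\}$ is a matching of size $\nu+1$ in $H$, contradicting the hypothesis that $\nu(H)=\nu$.

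The argument is essentially a routine greedy swap, so no step is a serious obstacle; the only care needed is to (i) make the forbidden set $F_i$ genuinely avoid all previously committed vertices (including those in $e^*$ and in the partial matching $e_1',\ldots,e_{i-1}'$), and (ii) verify the quantitative bound $|F_i|\Delta_2<d_1(x_i)$ under the given hypothesis $d_1(x_i)\ge\Delta_2(n-\nu)/12$. Both reduce to the simple inequality $n>37\nu$, which is absorbed into the assumption that $n$ is sufficiently large relative to $\nu$ and $\Delta_2$.
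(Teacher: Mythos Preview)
Your proposal is correct and follows essentially the same argument as the paper: assume an edge $e^*$ in $H-U$, then greedily pick for each $x_i\in U$ an edge $e_i'$ through $x_i$ avoiding $e^*$, $U\setminus\{x_i\}$, and the previously chosen edges, using the codegree bound to count at most $3\nu\Delta_2$ blocked edges against $d_1(x_i)\ge\Delta_2(n-\nu)/12$. The paper even invokes the identical threshold $n>37\nu$, so your proof matches the original almost line for line.
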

Suppose to the contrary that there is an edge $e_0\in E(H-U)$. We first show that we can greedily find $\nu+1$ independent edges in $H$. Assume we have found $t$ independent edges $e_0, e_1, \ldots, e_{t-1}$ for some $1\le t\le \nu$ such that $e_i\cap U=\{x_i\}$ for each $1\le i\le t-1$.  For $x_{t}$, note that $\Delta_2(H)\le \Delta_2$, we have
{
$$|\{e : x_{t}\in e \mbox{ and } e\cap(\cup_{i=0}^{t-1}e_i\cup (U\setminus\{x_{t})\})\not=\emptyset\}|\le (2t+\nu)\Delta_2\le  3\nu\Delta_2<\frac{\Delta_2(n-\nu)}{12}$$
when $n>37\nu$.}
Since $d_1(x_t)\ge \frac{\Delta_2(n-\nu)}{12}$, there is an edge $e_t$ such that $e_t\cap U=\{x_t\}$ and $e_0,e_1,\ldots,e_t$ are independent in $H$. The end of the process gives us a matching of $H$ with $\nu+1$ edges, a contradiction to $\nu(H)=\nu$.

By the above claim, all the edges of $H$ intersect a given subset of vertices of size $\nu$, Lemma~\ref{LEM: L7} implies that $e(H)\le f(n,\nu,\Delta_2)$, a contradiction to our assumption.
The proof is completed.
\end{proof}

\section{Concluding Remark}

In this paper, $3$-graphs are not necessarily simple, it is natural to ask what is the size of simple $3$-graphs under the same {restrictions}.
\begin{problem}\label{PROB: p1}
What is the maximum size of simple $3$-graphs on $n$ vertices with maximum codegree $\Delta_2$ and matching number $\nu$?
\end{problem}
It is plausible if the answer of Problem~\ref{PROB: p1} is $f(n,\nu,\Delta_2)$. To do this, the only difficulty is if we can construct simple MPTS$(\nu,\lambda)$ for $\lambda\leq \nu-2$. Dehon~\cite{MD-DM83} proved that there exists a simple TS$(\nu,\lambda)$ if and only if $\lambda\leq \nu-2$, $\lambda \nu(\nu-1)=0(\text{mod } 6)$ and $\lambda (\nu-1)=0(\text{mod } 2)$.
\begin{problem}\label{PROB: p2}
Do we have  sufficient and necessary condition for the existence of  a simple PTS$(\nu,\lambda)$?
\end{problem}



\end{document}